\newtheorem{teo}{Theorem}
\newtheorem{lem}[teo]{Lemma}
\newtheorem{obs}{Remark}
\newcommand\DoubleLine[7][4pt]{
\path(#2)--(#3)coordinate[at start](h1)coordinate[at end](h2);
\draw[#4]($(h1)!#1!90:(h2)$)-- node [auto=left] {#5} ($(h2)!#1!-90:(h1)$);
\draw[#6]($(h1)!#1!-90:(h2)$)-- node [auto=right] {#7} ($(h2)!#1!90:(h1)$);}
\begin{document}
\textheight=570pt 
\begin{center} {\Large\bf A procedure to obtain symmetric cycles of any odd length using directed Haj\'os constructions}
\vskip 1cm {\bf Juan Carlos Garc\'ia-Altamirano, Mika Olsen, Jorge Cervantes-Ojeda }\\
\
\\Departmento de Matem\'aticas Aplicadas y Sistemas\\
Universidad Aut\'onoma Metropolitana - Cuajimalpa\\
M\'exico City, M\'exico\\
\
\\email: carlos\_treze@ciencias.unam.mx, olsen@cua.uam.mx, jcervantes@cua.uam.mx\\
\end{center}
\begin{center}
(Received January 17th, 2023)
\end{center}

\begin{abstract}
The dichromatic number of a digraph $D$ is the minimum number of colors of a vertex coloring of $D$ such that $D$ has no monochromatic cycles.
The Haj\'os join were recently extended to digraphs (using the dichromatic number) by J. Bang-Jensen et. al. and Haj\'os (directed) operations is a tool to obtain r-(di)chromatic (di)graphs. J. Bang-Jensen et. al. posed in 2020 the problem of how to obtain  the symmetric cycle of length 5 from symmetric cycles of length 3. We recently solved this problem by applying a genetic algorithm. In this article, a procedure is presented to construct any odd symmetric cycle by applying  directed Haj\'os operations to symmetric cycles of length 3, thus, generalizing the known construction of the symmetric cycle of length 5. In addition, this procedure is analyzed to determine its computational complexity.
\end{abstract}

\section{Introduction}
The dichromatic number of a digraph $D$ was introduced by V. Neumann-Lara in 1982 \cite{Neumann1982} as an extension of the chromatic number of a graph. The dichromatic number of a digraph is the minimum number of colors of a vertex coloring of $D$ such that $D$ has no monochromatic cycles and several concepts and results for the chromatic number of a graph have been extended to digraphs using the dichromatic number. For instance, \cite{Andres,diachromatico,Narda,GLOR,Harut,Hoch}.
%
In 2020 J. Bang-Jensen et. al. \cite{BangJensen2020} extended the well-known Haj\'os  join for graphs to digraphs.

The digraph $H$ obtained by {\bf identifying} a non-empty set $I$ of independent vertices is defined as the digraph $H=D-I$ adding a new vertex $v$  and adding all arcs from $v$ to $N^+_D(I)=\bigcup\limits_{u\in I} N^+_D(u)$ and all arcs from $N^-_D(I)=\bigcup\limits_{u\in I} N^-_D(u)$ to $v$. The new vertex $v$ may preserve the label of one of the vertices of the independent set $I$.

The Haj\'os join was defined for digraphs in 2020 by Bang-Jensen et. al. \cite{BangJensen2020} as an extension of the well-known Haj\'os  join \cite{Hajos,JR,Urquhart} for graphs. 
We use Figure \ref{Fig_Hajos} to illustrate the definition of directed Haj\'os Join. Let $D_1$ and $D_2$ be two disjoint digraphs. Let $u_1v_1\in A(D_1)$ and $v_2u_2\in A(D_2)$. The {\bf directed Haj\'os join} $D=(D_1,u_1,v_1)\triangledown(D_2,v_2,u_2)$ or, briefly $D=D_1\triangledown D_2$ of $D_1$ and $D_2$ is defined as the disjoint union of $D_1$ and $D_2$ and deleting both arcs $u_1v_1$ and $v_2u_2$, identifying the vertices $v_1$ and $v_2$ to a new vertex $v$ and adding the arc $u_1u_2$. The vertex $v$ may be denoted by $v_1$, $v_2$ or $v$.

\vspace{-0.7 cm}
\hspace{-0.5 cm}\begin{figure}[ht]
\beginpgfgraphicnamed{example}
\begin{center}
\begin{tikzpicture}[myn/.style={circle,very thick,draw,inner sep=0.11cm,outer sep=2.5pt},label distance=-1mm]

\node [myn,circle,label=45:$v_1$,fill=black,minimum size=0.55cm] 
(0) at (45.000000:1.1000) {};

\node [myn,circle,minimum size=0.55cm] 
(1) at (165.000000:1.1000) {};

\node [myn,circle,label=285:$u_1$,minimum size=0.55cm] 
(2) at (285.000000:1.1000) {};

\draw[very thick,black][->] (0) -- (1);
\draw[very thick,black][->] (1) -- (2);
\draw[very thick,black][dashed,->] (2) -- (0) node [pos=0.5,right,font=\footnotesize] {delete};

\pgftransformshift{\pgfpoint{4.00000cm}0cm}

\node [myn,circle,label=135:$v_2$,fill=black,minimum size=0.55cm] 
(3) at (135.000000:1.1000) {};

\node [myn,circle,label=255:$u_2$,minimum size=0.55cm] 
(4) at (255.000000:1.1000) {};

\node [myn,circle,minimum size=0.55cm] 
(5) at (15.000000:1.1000) {};

\draw[very thick,black][dashed,->] (3) -- (4) node [pos=0.5,left,font=\footnotesize] {delete};
\draw[very thick,black][->] (4) -- (5);
\draw[very thick,black][->] (5) -- (3);

\draw[line width = 2.0pt,black][->] (2) -- (4) node [pos=0.5,below,font=\footnotesize] {add};
\draw[line width = 0.0pt,white][-] (0) -- (3) node [pos=0.5,font=\footnotesize,black] {identify};

\pgftransformshift{\pgfpoint{5.00000cm}0cm}

\node [myn,circle,label=90:$v$,fill=black,minimum size=0.55cm] 
(0) at (90.000000:1.1000) {};

\node [myn,circle,minimum size=0.55cm] 
(1) at (162.000000:1.1000) {};

\node [myn,circle,label=234:$u_1$,minimum size=0.55cm] 
(2) at (234.000000:1.1000) {};

\node [myn,circle,label=306:$u_2$,minimum size=0.55cm] 
(3) at (306.000000:1.1000) {};

\node [myn,circle,minimum size=0.55cm] 
(4) at (378.000000:1.1000) {};

\draw[very thick,black][->] (0) -- (1);
\draw[very thick,black][->] (1) -- (2);
\draw[very thick,black][->, line width = 2.0pt] (2) -- (3);
\draw[very thick,black][->] (3) -- (4);
\draw[very thick,black][->] (4) -- (0);
\end{tikzpicture}
\end{center}
\endpgfgraphicnamed
\caption{Directed Haj\'os  join of two directed triangles and the resulting digraph.}
\label{Fig_Hajos}
\end{figure}
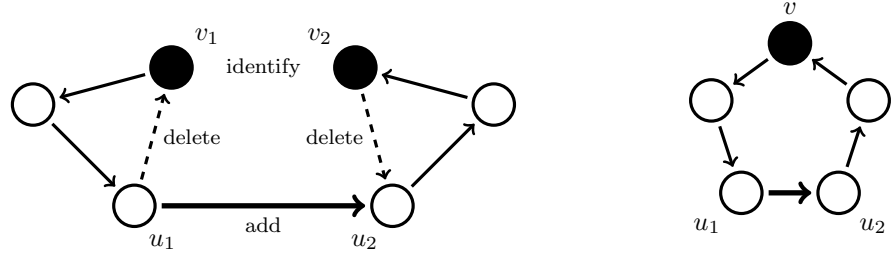

We call the directed Haj\'os join and the vertex identifications the {\bf directed Haj\'os operations}.  The class of {\bf Haj\'os-k-constructible digraphs} defined as the smallest family of digraphs that contains all complete digraphs of order $k$ and is closed under directed Haj\'os operations.  
J. Bang-Jensen et. al. \cite{BangJensen2020} proved that any $k$-critical digraph is Haj\'os-$k$-constructible. Since odd symmetric cycles are 3-critical, any symmetric odd cycle can be constructed from the symmetric complete digraph on three vertices $D(K_3)$ using a sequence of directed Haj\'os operations (directed Haj\'os joins and identifications of non-adjacent vertices). 
In the same paper they posed the following question (Question 19): 

\emph{How can a bidirected (symmetric) $C_5$ ($D(C_5)$) be constructed from copies of $D(K_3)$ by only using directed Haj\'os operations?}

In a recent paper J. C. Garc\'ia-Altamirano et. al. \cite{GarciaAlta} obtained a sequence of Haj\'os operations applied to $D(K_3)$ in order to obtain the symmetric cycle $D(C_5)$ using a Rank Genetic Algorithm. In this paper we generalize this sequence to obtain any odd symmetric cycle applying Haj\'os operations from  $D(K_3)$. 
This procedure provides an upper bound to the Haj\'os number of any odd symmetric cycle and an upper bound of its computational complexity. 

We consider finite digraphs without loops and multiple arcs. For all definitions not given here we refer the reader to the book of J. Bang-Jensen and G. Gutin  \cite{Conceptos}.  
Let $D$ be a digraph with vertex set $V (D)$ and arc set $A(D)$. The {\bf in-neighborhood} of a vertex $u$ is $N^-(u) = \{v\in V(D)\mid vu\in A(D)\}$  and the {\bf out-neighborhood} of a vertex $u$ is $N^+(u) = \{v\in V(D)\mid uv\in A(D)\}$. 
Two vertices in a digraph $D$ are {\bf independent} if there are no arcs between them in $D$, a set of vertices $X$ is independent in a digraph $D$ if any pair of vertices of $X$ are independent in $D$.
An arc $uv\in A(D)$ is {\bf symmetric} ({\bf asymmetric}) if $vu\in A(D)$ ($vu\notin A(D)$), and a digraph is symmetric (bidirected graph) if every arc of $D$ is a symmetric arc.
The {\bf symmetric digraph} $D(G)$, of the graph $G$, is the digraph obtained by replacing each edge by a symmetric arc.

\section{Construction of odd symmetric cycles}

Let $D$ and $D'$ be two disjoint digraphs of the same order $n$, with vertex-set $\{v_0,v_1,...,v_{n-1}\}$ and $\{v'_0,v'_1,...,v'_{n-1}\}$ resp., and let  $v_iv_j \in A(D)$ and $v'_kv'_l\in A(D')$, such that $i,j,k,l \in \{0,1,...,n-1\}$, $i\neq j$, $k\neq l$ and $j-i \neq k-l$. We define the \textbf{cyclic of Haj\'os identification} $H = (D,v_i,v_j)\otimes (D',v'_k,v'_l)$ or, simply $H = D\otimes D'$, as the digraph obtained by the directed Haj\'os Join $\hat{H} := (D, v_i, v_j) \triangledown (D',v'_k,v'_l)$, and the following $n-1$ identifications: for $r = 1,2,...,n-1$, the vertices $v'_{k+r}$ and $v_{j+r}$ are identified in the vertex $v_{j+r}$, the indices are taken {modulo} $n$.

\begin{obs}\label{Obs}
By definition, we identify the vertices $v'_a,v_{a-k+j} \in V(\hat{H})$ in the vertex $v_{a-k+j}$.
The condition  $j-i \neq k-l$ assures that the vertices $v'_l$ and $v_i$ are not identified, because in this case a loop is obtained in the vertex $v_i$ by the arc that is added  in $\hat{H}$. 
Note that two vertices in $\hat{H}$ which where original consecutive vertices in $D'$ are identified into two vertices which where originally two consecutive vertices in $D$. 
Moreover, if $v'_bv'_a \in A(\hat{H})$, in $H$ this arc becomes the arc $v_{b-k+j}v_{a-k+j}$ in $H$.
\end{obs}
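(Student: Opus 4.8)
The plan is to reduce every assertion in the Remark to a single bookkeeping fact: after all the identifications the primed vertex $v'_a$ is merged with the unprimed vertex indexed by the translation $a\mapsto a-k+j\pmod{n}$. Once this index map is established, each of the four statements follows by direct substitution.

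First I would specialize the definition of the directed Haj\'os join to $\hat H=(D,v_i,v_j)\triangledown(D',v'_k,v'_l)$, i.e. with $u_1=v_i$, $v_1=v_j$, $v_2=v'_k$, $u_2=v'_l$: the join deletes the arcs $v_iv_j$ and $v'_kv'_l$, merges $v_j$ with $v'_k$, and adds the arc $v_iv'_l$. Observe that this merge is already the case $r=0$ of the map $a\mapsto a-k+j$, since $v'_k$ is identified with $v_j=v_{k-k+j}$. The definition then prescribes the further identifications of $v'_{k+r}$ with $v_{j+r}$ for $r=1,\dots,n-1$; setting $a\equiv k+r\pmod{n}$ rewrites each of these as the identification of $v'_a$ with $v_{a-k+j}$. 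Together with the $r=0$ case this yields the first assertion: for every $a\in\{0,\dots,n-1\}$ the vertices $v'_a$ and $v_{a-k+j}$ are identified. Because $a\mapsto a-k+j$ is a translation of $\mathbb{Z}_n$, it is a bijection, so the $n$ primed vertices are matched one-to-one with the $n$ unprimed ones; hence the identifications are consistent and $V(H)=\{v_0,\dots,v_{n-1}\}$.

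The remaining assertions then follow at once. The only arc of $\hat H$ not inherited from $D$ or $D'$ is the added arc $v_iv'_l$, and by the first assertion $v'_l$ is identified with $v_{l-k+j}$. This arc collapses to a loop at $v_i$ exactly when $v_{l-k+j}=v_i$, that is when $l-k+j\equiv i\pmod{n}$, equivalently $j-i\equiv k-l$; thus the hypothesis $j-i\neq k-l$ is precisely what forbids the loop, which is the second assertion. For the third, the translation $a\mapsto a-k+j$ preserves index differences, so a pair $v'_a,v'_{a+1}$ consecutive in $D'$ is sent to $v_{a-k+j},v_{(a-k+j)+1}$, consecutive in $D$. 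For the fourth, an arc $v'_bv'_a$ of $\hat H$ has both endpoints relabelled by the same map, giving the arc $v_{b-k+j}v_{a-k+j}$ in $H$.

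The only point requiring care is the modular bookkeeping: I must verify that $a\mapsto a-k+j$ is well defined and bijective mod $n$ (so that no unprimed vertex is assigned two primed vertices and none is omitted), and that the merge produced by the join fits the same formula as the explicitly listed identifications for $r=1,\dots,n-1$. I expect the loop-avoidance argument for the added arc $v_iv'_l$ to be the substantive step, while the conditions $j-i\neq k-l$, $i\neq j$ and $k\neq l$ are exactly what keep the construction free of loops and repeated identifications; the rest is routine substitution.
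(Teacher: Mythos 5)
Your proposal is correct and matches the paper's own (implicit) justification: the paper states this Remark as following directly from the definition of the cyclic Haj\'os identification, and your index-translation map $a\mapsto a-k+j \pmod{n}$ is exactly the bookkeeping that underlies it, including the observation that the join's own merge of $v_j$ with $v'_k$ is the $r=0$ case and that the added arc $v_iv'_l$ becomes a loop precisely when $j-i\equiv k-l \pmod{n}$. Your explicit check that the translation is a bijection of $\mathbb{Z}_n$ (so the identifications are consistent) is a small but worthwhile addition that the paper leaves unstated.
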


\begin{lem}\label{Lem}
Let $n\ge1$ and let $D$ be a symmetric cycle  of order $2^{n+1} +1$ with asymmetric arcs $v_0v_{2^n}, v_{x}v_a$, where $x = 2^n+a+1$. Let  $D'$ be a copy of $D$ such that $v_i'\in V(D')$ is the copy of the vertex $v_i\in V(D)$. Then $H = (D,v_{x},v_a)  \otimes (D',v'_0,v'_{2^n})$ is a symmetric cycle of order $2^{n+1}+1$ with two asymmetric arcs $v_0v_{2^n}, v_{2^n+2a+1}v_{2a}$.
\end{lem}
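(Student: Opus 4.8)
Write $N=2^{n+1}+1$ and read $D$ as the symmetric cycle $v_0v_1\cdots v_{N-1}v_0$ together with the two extra asymmetric arcs $v_0v_{2^n}$ and $v_xv_a$. The plan is to make the definition of $\otimes$ fully explicit, compute the arc set of $H$ source by source, and read off that only the symmetric cycle and exactly two asymmetric arcs survive. First I would invoke Remark~\ref{Obs} with $(i,j,k,l)=(x,a,0,2^n)$ to record that the $N-1$ identifications, together with the join identification $v_a\equiv v'_0$, amount to $v'_r\equiv v_{a+r}$ for every $r$ (indices mod $N$); hence $V(H)=\{v_0,\dots,v_{N-1}\}$ and each arc $v'_sv'_t$ of $D'$ becomes the arc $v_{s+a}v_{t+a}$ of $H$. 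The arcs of $H$ then split into three families: the arcs of $D$ other than the deleted chord $v_xv_a$ (the symmetric cycle plus the surviving chord $v_0v_{2^n}$); the arcs of $D'$ other than the deleted chord $v'_0v'_{2^n}$, transported by $r\mapsto r+a$; and the single added arc $v_xv'_{2^n}$.

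Next I would track each family. A symmetric cycle edge $v'_rv'_{r+1}$ of $D'$ is sent to $v_{r+a}v_{(r+1)+a}$, again a cycle edge and carrying both orientations, so together with the cycle arcs of $D$ every cycle edge of $H$ is symmetric and the underlying symmetric cycle is preserved. The chord $v_0v_{2^n}$ of $D$ is left untouched, while the chord $v'_xv'_a$ of $D'$ is sent to $v_{x+a}v_{2a}=v_{2^n+2a+1}v_{2a}$; these are the two candidate asymmetric arcs. What remains is to control the added arc and to certify that these two arcs really are asymmetric.

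The step I expect to be the real obstacle is showing that the added arc creates no third asymmetric arc. Under the fold $v'_{2^n}\equiv v_{2^n+a}$, so the added arc becomes $v_xv_{2^n+a}=v_{2^n+a+1}\,v_{2^n+a}$; its endpoints have consecutive indices, so it runs along the already symmetric cycle edge $\{v_{2^n+a},v_{2^n+a+1}\}$ and merely reinforces it. This is exactly where the hypothesis $x=2^n+a+1$ is used. To finish I would check that no family supplies the reverse arcs $v_{2^n}v_0$ or $v_{2a}v_{2^n+2a+1}$: each such reverse would have to be a cycle edge (impossible, since $2^n\not\equiv\pm1\pmod N$ for $n\ge1$) or coincide with the opposite chord (impossible for admissible $a$), so both arcs stay asymmetric. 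After disposing of the boundary cases in which two of these arcs share endpoints, one concludes that $H$ is the symmetric cycle of order $N$ with asymmetric arcs $v_0v_{2^n}$ and $v_{2^n+2a+1}v_{2a}$, as claimed.
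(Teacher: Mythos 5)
Your proof is correct and takes essentially the same route as the paper's: via Remark~\ref{Obs} the identifications amount to $v'_r\equiv v_{a+r}$, the added arc $v_xv'_{2^n}$ lands on the consecutive pair $v_{2^n+a+1}v_{2^n+a}$ and merely duplicates a symmetric cycle arc, and the surviving chord $v'_xv'_a$ transports to $v_{2^n+2a+1}v_{2a}$, so that $H=D+\{v_{x+a}v_{2a}\}-\{v_xv_a\}$. Your explicit verification that the reverse arcs $v_{2^n}v_0$ and $v_{2a}v_{2^n+2a+1}$ occur in no family (and your flagging of the degenerate case where the two chords coincide, which the paper only handles later inside Theorem~\ref{TheoNextCycle}) is slightly more thorough than the paper, which leaves those points implicit, but it is the same argument.
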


\begin{proof}
Let  $H = (D,v_{x},v_a)  \otimes (D',v'_0,v'_{2^n})$, by Remark \ref{Obs},
\begin{description}
  \item $(i)$ the vertex $v'_{2^n}\in\hat{H}$ is identified with the vertex $v_{(2^n)-(0)+(a)} = v_{2^{n}+a} = v_{x-1}$, so the arc $v_{x}v'_{2^n}\in A(\hat{H})$ is transformed into the arc $v_{x}v_{x-1}\in A(H)$, that is, an arc between two vertices consecutive, therefore this arc is a symmetric arc.
  \item $(ii)$ The arc $v'_{x}v'_a$ de $\hat{H}$ is transformed into the arc $v_{(2^n+a+1)-0+a}v_{a-0+a} = v_{2^n + 2a + 1}v_{2a} = v_{x+a}v_{2a}$ of $H$.
\end{description}      
Note that the digraphs $D$ and $H$ only differ in two arcs, namely $v_{x}v_a= A(D)\setminus A(H)$ and the arc $v_{x+a}v_{2a}= A(H)\setminus A(D)$, thus $H=D+\{v_{x+a}v_{2a}\} -\{v_{x}v_a\}$. In Figure \ref{Fig_xal} the dotted arrow represents the deleted arc $v_{x}v_a$.

\hspace{-0.5 cm}
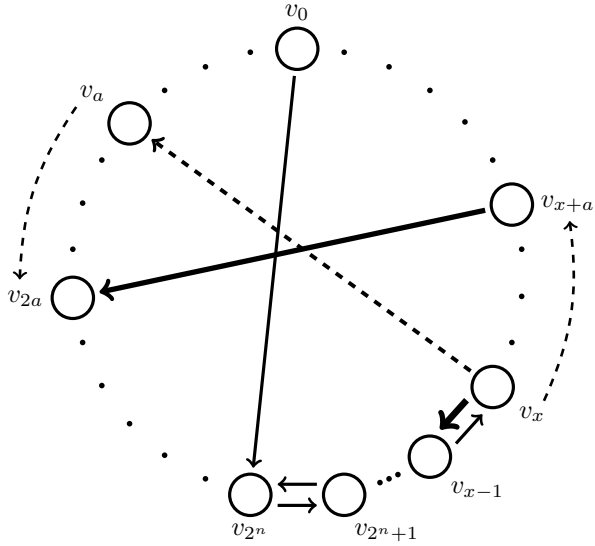
\begin{figure}[H]
\beginpgfgraphicnamed{H15}
\begin{center}
\begin{tikzpicture}[myn/.style={circle,very thick,draw,inner sep=0.11cm,outer sep=2.5pt},label distance=-1mm]

\node [myn,circle,label=90:$v_0$,minimum size=0.55cm] 
(0) at (90.000000:3.0000) {};

\node(1) at (114.000:3.0000) {\scalebox{2}{$\color{black} \cdot$}};
\node(15) at (102.000:3.0000) {\scalebox{2}{$\color{black} \cdot$}};
\node(16) at (126.000:3.0000) {\scalebox{2}{$\color{black} \cdot$}};

\node [myn,circle,label=138.000000:$v_{a}$,minimum size=0.55cm] 
(2) at (138.000000:3.0000) {};

\node(1) at (162.000:3.0000) {\scalebox{2}{$\color{black} \cdot$}};
\node(17) at (150.000:3.0000) {\scalebox{2}{$\color{black} \cdot$}};
\node(18) at (174.000:3.0000) {\scalebox{2}{$\color{black} \cdot$}};

\node [myn,circle,label=182.000000:$v_{2a}$,minimum size=0.55cm] 
(4) at (186.000000:3.0000) {};

\node (34) at (140.000000:3.7000)  {\scalebox{1}{}};
\node (35) at (183.000000:3.7000) {\scalebox{1}{}} edge [<-,dashed, line width = 1 pt,bend left=16] (34);

\node(5) at (210.000:3.0000) {\scalebox{2}{$\color{black} \cdot$}};
\node(6) at (234.000:3.0000) {\scalebox{2}{$\color{black} \cdot$}};
\node(19) at (222.000:3.0000) {\scalebox{2}{$\color{black} \cdot$}};
\node(30) at (198.000:3.0000) {\scalebox{2}{$\color{black} \cdot$}};
\node(31) at (246.000:3.0000) {\scalebox{2}{$\color{black} \cdot$}};

\node [myn,circle,label=270.000000:$v_{2^n}$,minimum size=0.55cm] 
(7) at (258.000000:3.0000) {};

\node [myn,circle,label=273.000000:$v_{2^n+1}$,minimum size=0.55cm] 
(8) at (282.000000:3.0000) {};

\node(9) at (48+306.000:3.0000) {\scalebox{2}{$\color{black} \cdot$}};
\node(20) at (48+294.000:3.0000) {\scalebox{2}{$\color{black} \cdot$}};
\node(21) at (48+318.000:3.0000) {\scalebox{2}{$\color{black} \cdot$}};

\node [myn,circle,label=330.000000:$v_{x}$,minimum size=0.55cm] 
(10) at (330.000000:3.0000) {};

\node(25) at (-48+342.000000:3.0000) {\scalebox{2}{$\color{black} \cdot$}};
\node(26) at (-48+339.5000000:3.0000) {\scalebox{2}{$\color{black} \cdot$}};
\node(27) at (-48+344.5000000:3.0000) {\scalebox{2}{$\color{black} \cdot$}};

\node [myn,circle,label=-48+354.000000:$v_{x-1}$,minimum size=0.55cm] 
(11) at (-48+354.000000:3.0000) {};

\node [myn,circle,label=362.000000:$v_{x+a}$,minimum size=0.55cm] 
(12) at (378.000000:3.0000) {};

\node (36) at (330.000000:3.7000)  {\scalebox{1}{}};
\node (37) at (373.000000:3.7000) {\scalebox{1}{}} edge [<-,dashed, line width = 1 pt,bend left=16] (36);

\node(13) at (402.000:3.0000) {\scalebox{2}{$\color{black} \cdot$}};
\node(14) at (426.000:3.0000) {\scalebox{2}{$\color{black} \cdot$}};
\node(29) at (414.000:3.0000) {\scalebox{2}{$\color{black} \cdot$}};
\node(32) at (390.000:3.0000) {\scalebox{2}{$\color{black} \cdot$}};
\node(33) at (438.000:3.0000) {\scalebox{2}{$\color{black} \cdot$}};

\draw[very thick,black][->] (0) -- (7);
\draw[very thick,black][dashed,->,line width = 1.5 pt] (10) -- (2);
\draw[very thick,black][->,line width = 2 pt] (12) -- (4);
\DoubleLine{7}{8}{<-,very thick}{}{->,very thick}{}
\DoubleLine{11}{10}{<-,very thick,line width = 2.5 pt}{}{->,very thick}{}

\end{tikzpicture}
\end{center}
\endpgfgraphicnamed
\caption{$H= D+\{v_{x+a}v_{2a}\} -\{v_{x}v_a\}$.}
\label{Fig_xal}
\end{figure}

 For $r = 2,\dots,n+1$, the digraph $H_r = (H_{r-1},v_{2^n+{2^{r-2}} + 1},v_{2^{r-2}})  \otimes (H'_{r-1},v'_0,v'_{2^n})$ is a symmetric cycle of order $2^{n+1}$ with the  asymmetric arcs $v_0v_{2^n}$, $v_{2^{n} + {2^{r-1}} + 1}v_{2^{r-1}}$.
\end{proof}

\begin{teo}\label{TheoNextCycle}
Let  $n\ge1$. The symmetric cycle of order $2^{n+1}+1$ can be constructed from the symmetric cycle  de order $2^{n}+1$ using $(n+2)\left(2^{n+1}+1\right)$ directed Haj\'os operations.
\end{teo}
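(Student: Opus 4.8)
The plan is to derive Theorem~\ref{TheoNextCycle} from Lemma~\ref{Lem} by packaging the whole construction into $n+2$ equal-cost blocks, each block being a single cyclic Hajós identification $\otimes$. Since a $\otimes$ on two digraphs of order $2^{n+1}+1$ is, by definition, one directed Hajós join followed by $(2^{n+1}+1)-1=2^{n+1}$ vertex identifications, each block costs exactly $1+2^{n+1}=2^{n+1}+1$ directed Hajós operations. It therefore suffices to exhibit a chain of $n+2$ such blocks leading from the symmetric cycle of order $2^n+1$ to the symmetric cycle of order $2^{n+1}+1$, and the count $(n+2)(2^{n+1}+1)$ follows immediately.

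First I would build the seed $H_1$: an order-$(2^{n+1}+1)$ symmetric cycle carrying the two asymmetric arcs $v_0v_{2^n}$ and $v_{2^n+2}v_1$, i.e. the parameter $a=1$ configuration that Lemma~\ref{Lem} expects. This is the only stage at which the order grows, from $2^n+1$ to $2^{n+1}+1$, and it is the delicate one: a directed Hajós join of two copies of the order-$(2^n+1)$ cycle does reach order $(2^n+1)+(2^n+1)-1=2^{n+1}+1$, but the identified vertex acquires degree four, so the join alone is not a cycle. I would therefore carry out the join together with the cyclic sweep of identifications (as in the definition of $\otimes$ and in Remark~\ref{Obs}), so that the second copy is threaded into the first and the degree-four vertex is resolved into two consecutive cycle vertices, leaving precisely the two prescribed asymmetric arcs.

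Next I would iterate Lemma~\ref{Lem}. For $r=2,\dots,n+1$ put $H_r=(H_{r-1},v_{2^n+2^{r-2}+1},v_{2^{r-2}})\otimes(H'_{r-1},v'_0,v'_{2^n})$; by the lemma each step fixes the arc $v_0v_{2^n}$ and advances the moving arc from parameter $2^{r-2}$ to $2^{r-1}$. Hence the parameter runs through $1,2,4,\dots,2^n$, and after these $n$ blocks the moving arc has parameter $2^n$, where $v_{2^n+2^n+1}v_{2^n}=v_0v_{2^n}$, so it lands on the same edge as the fixed arc. The step I expect to need the most care is this terminal coincidence: I must show that bringing the moving one-way arc onto the already one-way edge $v_0v_{2^n}$ restores the reverse arc $v_{2^n}v_0$, so that the last digon is completed and every edge becomes symmetric.

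Finally I would collect the bookkeeping. The $n$ iterations of Lemma~\ref{Lem} use $n$ blocks; the order-raising construction of the seed $H_1$ and the terminal restoration of the missing digon account for the remaining two blocks; and each block is one $\otimes$, hence $2^{n+1}+1$ directed Hajós operations, giving $(n+2)(2^{n+1}+1)$ in total. The main obstacle, as flagged, is the seed: one must realize the order doubling while avoiding the degree-four obstruction and verify that the outcome is exactly the $a=1$ symmetric cycle required by Lemma~\ref{Lem}. Once that is secured, the remainder is a clean induction on $r$ driven by the lemma and closed by the terminal-merge check.
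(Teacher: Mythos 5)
There is a genuine gap, and it sits exactly at the step you flagged as delicate: the seed. Your plan builds $H_1$ (the order-$(2^{n+1}+1)$ cycle with chords $v_0v_{2^n}$ and $v_{2^n+2}v_1$) in a \emph{single} $\otimes$-block applied to two copies of the order-$(2^n+1)$ cycle, by ``carrying out the join together with the cyclic sweep of identifications.'' But the cyclic Haj\'os identification is defined only for two digraphs of the \emph{same} order $N$ and it \emph{preserves} that order: the join produces order $2N-1$ and the prescribed $N-1$ identifications bring it back to $N$. Applied to two copies of the small cycle ($N=2^n+1$), your block would return a digraph of order $2^n+1$, not $2^{n+1}+1$; if instead you stop after the join to keep the doubled order, you have no sweep and the result is not a cycle. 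Moreover, your phrase that the sweep lets ``the degree-four vertex be resolved into two consecutive cycle vertices'' cannot be implemented: identifications only merge vertices, they never split one. So the one-block, order-raising seed you rely on does not exist as described, and the subsequent induction has nothing to start from.

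The paper resolves this with a two-stage seed, and this changes the count. First it performs a bare Haj\'os join $H_0=(D,v_{2^n},v_0)\triangledown(D',v'_0,v'_1)$ of two copies of the small cycle: one single operation, giving a digraph of order $2^{n+1}+1$ which is \emph{not} a cycle (a symmetric spanning path plus three asymmetric arcs). Then it forms $H_1=(H_0,v_0,v_{2^n})\otimes(H'_0,v'_{2^n+1},v'_0)$, a full $\otimes$ of \emph{two copies of $H_0$} --- legitimate because both already have order $2^{n+1}+1$ --- so that the symmetric path of the second copy is laid over the defective arcs of the first; after a cyclic relabeling this is exactly the $a=1$ configuration needed for Lemma~\ref{Lem}. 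Your iteration blocks ($r=2,\dots,n+1$, doubling the parameter until the moving chord lands on $v_0v_{2^n}$) and the final cleanup block do match the paper, but the total becomes $(n+2)\left(2^{n+1}+1\right)+1$ operations --- the paper's own closing sentence states this number, one more than the theorem's headline bound --- so your accounting reproduces the stated $(n+2)\left(2^{n+1}+1\right)$ only by assuming the impossible one-block seed. A smaller misreading: the asymmetric arcs in Lemma~\ref{Lem} are \emph{chords} added on top of a complete symmetric cycle ($v_0$ and $v_{2^n}$ are not consecutive), so the last step does not ``restore the reverse arc $v_{2^n}v_0$ to complete a digon''; the final $\otimes$ deletes the surviving chord in both copies via the join, and the newly added arc falls onto an already-symmetric arc between consecutive vertices, leaving the clean cycle $D(C_{2^{n+1}+1})$.
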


\begin{proof}
Let  $n\geq 1$, consider the symmetric cycle of order $2^n+1$, $D = D(C_{2^n+1}) = (v_0,v_1,...,v_{2^n},v_0)$ and let $D'  = (v'_0,v'_1,...,v'_{2^n},v'_0)$ be a disjoint copy of $D$, where $v'_i$ is the copy of $v_i$ for $i \in \{0,1,...,2^n\}$. We define the digraph $H_0 = (D,v_{2^n},v_0)  \triangledown (D',v'_0,v'_1)$, see Figure \ref{Fig_H0}. 

\hspace{-0.5 cm}
\begin{figure}[ht]
\beginpgfgraphicnamed{H0}
\begin{center}
\begin{tikzpicture}[myn/.style={circle,very thick,draw,inner sep=0.11cm,outer sep=2.5pt},label distance=-1.5mm]]

\node [myn,circle,label=90-72:$v_0$,fill=black,minimum size=0.55cm] 
(0) at (90.000000-72:1.1000) {};

\node [myn,circle,label=162.000000-72:$v_1$,minimum size=0.55cm] 
(1) at (162.000000-72:1.1000) {};

\node (2) at (234.000000-72:1.1500000) {\scalebox{2}{$\color{black} \cdot$}};
\node (10) at (216.000000-72:1.1500000) {\scalebox{2}{$\color{black} \cdot$}};
\node (11) at (252.000000-72:1.1500000) {\scalebox{2}{$\color{black} \cdot$}};

\node [myn,circle,label=306.000000-38:$v_{2^n-1}$,minimum size=0.55cm] 
(3) at (306.000000-72:1.1000) {};

\node [myn,circle,label=378.000000-100:$v_{2^n}$,minimum size=0.55cm] 
(4) at (378.000000-72:1.1000) {};

\DoubleLine{0}{1}{<-,very thick}{}{->,very thick}{}
\DoubleLine{0}{4}{dashed,<-,very thick}{}{->,very thick}{}
\DoubleLine{3}{4}{<-,very thick}{}{->,very thick}{}

\pgftransformshift{\pgfpoint{4.0000cm}0cm}

\node [myn,circle,label=90.000000+78:$v'_0$,fill=black,minimum size=0.55cm] 
(5) at (90.000000+72:1.1000) {};

\node [myn,circle,label=162.000000+72:$v'_1$,minimum size=0.55cm] 
(6) at (162.000000+72:1.1000) {};

\node [myn,circle,label=234.000000+50:$v'_2$,minimum size=0.55cm] 
(7) at (234.000000+72:1.1000) {};

\node (8) at (306.000000+72:1.1500000) {\scalebox{2}{$\color{black} \cdot$}};
\node (12) at (288.000000+72:1.1500000) {\scalebox{2}{$\color{black} \cdot$}};
\node (13) at (324.000000+72:1.1500000) {\scalebox{2}{$\color{black} \cdot$}};

\node [myn,circle,label=378.000000+72:$v'_{2^n}$,minimum size=0.55cm] 
(9) at (378.000000+72:1.1000) {};

\DoubleLine{5}{6}{<-,very thick}{}{dashed,->,very thick}{}
\DoubleLine{5}{9}{<-,very thick}{}{->,very thick}{}
\DoubleLine{6}{7}{<-,very thick}{}{->,very thick}{}

\draw[line width = 2.0pt,black][->] (4) -- (6);

\pgftransformshift{\pgfpoint{5.0000cm}0cm}

\node [myn,circle,label=90.000000:$v_{0}$,fill=black,minimum size=0.55cm] 
(0) at (90.000000:1.8000) {};

\node [myn,circle,label=130.000000:$v_{1}$,minimum size=0.55cm] 
(1) at (130.000000:1.8000) {};

\node (2) at (170.000000:1.8800) {\scalebox{2}{$\color{black} \cdot$}};
\node (9) at (160.000000:1.8800) {\scalebox{2}{$\color{black} \cdot$}};
\node (10) at (180.000000:1.8800) {\scalebox{2}{$\color{black} \cdot$}};

\node [myn,circle,label=260.000000:$v_{2^n-1}$,minimum size=0.55cm] 
(3) at (210.000000:1.8000) {};

\node [myn,circle,label=260.000000:$v_{2^n}$,minimum size=0.55cm] 
(4) at (250.000000:1.8000) {};

\node [myn,circle,label=290.000000:$v'_{1}$,minimum size=0.55cm] 
(5) at (290.000000:1.8000) {};

\node [myn,circle,label=350.000000:$v'_{2}$,minimum size=0.55cm] 
(6) at (330.000000:1.8000) {};

\node (11) at (380.000000:1.8800) {\scalebox{2}{$\color{black} \cdot$}};
\node (7) at (370.000000:1.8800) {\scalebox{2}{$\color{black} \cdot$}};
\node (12) at (360.000000:1.8800) {\scalebox{2}{$\color{black} \cdot$}};

\node [myn,circle,label=390.000000:$v'_{2^n}$,minimum size=0.55cm] 
(8) at (410.000000:1.8000) {};

\DoubleLine{4}{0}{<-,very thick}{}{dashed,->,very thick}{}
\DoubleLine{0}{5}{<-,very thick}{}{dashed,->,very thick}{}
\DoubleLine{0}{1}{<-,very thick}{}{->,very thick}{}
\DoubleLine{0}{8}{<-,very thick}{}{->,very thick}{}
\DoubleLine{3}{4}{<-,very thick}{}{->,very thick}{}
\draw[line width = 2.0pt,black][->] (4) -- (5);
\DoubleLine{5}{6}{<-,very thick}{}{->,very thick}{}
\end{tikzpicture}
\end{center}
\endpgfgraphicnamed
\caption{The directed Haj\'os joint of two symmetric cycles and the resulting digraph $H_0 = (D,v_{2^n},v_0)  \triangledown (D',v'_0,v'_1)$.}
\label{Fig_H0}
\end{figure}
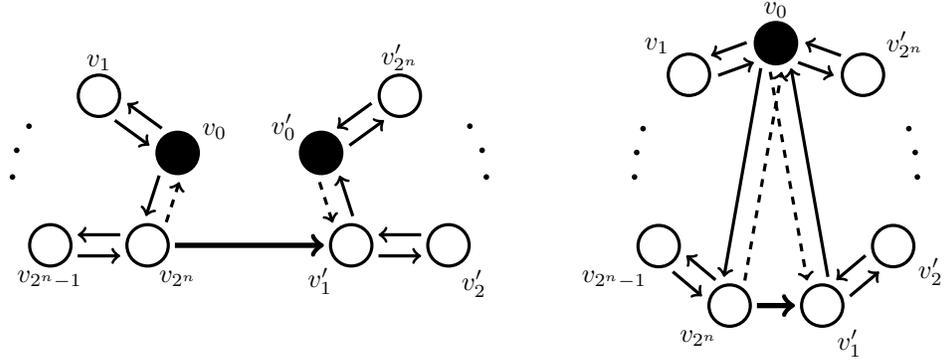

Relabel the vertices of $H_0$ as follows:

\begin{center}
$w = \left\{\begin{array}{ccc}
v_i & \text{if } & w = v_i \text{ for } i = 0,1,...,2^n,\\\\
v_{2^n + i} & \text{if } & w = v'_i \text{ for } i = 1,2,...,2^n.
\end{array}\right.$
\end{center}

Note that $H_0$ is a digraph of order $2^{n+1} + 1$, with asymmetric arcs  $v_{2^n}v_{2^n+1}$, $v_0v_{2^n}$, $v_{2^n+1}v_0$, and the symmetric path $v_{2^n+1},v_{2^n+2},\dots,v_{2^{n+1}},v_0,v_1,\dots,v_{2^n}$.
Let  $H'_0$ be a disjoint copy of $H_0$, with vertex-set $\{v'_0,v'_1,...,v'_{2^{n+1}}\}$ where $v'_i$ is the copy of the vertex $v_i$ for $i= 0,1,...,2^{n+1}$.

Let $H_1 = (H_0,v_0,v_{2^n})  \otimes (H'_0,v'_{2^n+1},v'_0)$.
In  Figure \ref{Fig_H1}, we consider the directed Haj\'os union and $2^{n+1}$ vertex identifications. The digraph on the left indicates the directed union of Haj\'os, where the dashed arrows must be removed, the thick arrow added, the black vertices are identified, and the shades of gray indicate the four pairwise vertex identifications. In each figure, the digraph on the right is the result of the $2^{n+1}+1$ Haj\'os operations.

By Remark \ref{Obs}:
\begin{description}
	\item $(i)$ The vertex $v'_0\in V(\hat{H}_1)$ is identified with the vertex $v_{(0)-(2^n+1)+(2^n)} = v_{2^{n+1}}$, hence, the arc $v_0v'_0\in A(\hat{H}_1)$, becomes the arc $v_0v_{2^{n+1}}\in A(H_1)$.
	\item $(ii)$ The arc $v'_0v'_{2^n}\in A(\hat{H}_1)$ becomes the arc $v_{(0)-(2^n+1)+(2^n)}v_{(2^n)-(2^n+1)+(2^n)} = v_{2^{n+1}}v_{2^n-1}$ in $H_1$.
\end{description}	

Thus, $H_1$ is a symmetric cycle of order $2^{n+1} + 1$ with asymmetric arcs $v_{2^{n+1}}v_{2^n-1}, v_{2^{n}+1}v_{0}$.

\hspace{-0.5 cm}
\begin{figure}[H]
\beginpgfgraphicnamed{H1p}
\begin{center}
\begin{tikzpicture}[myn/.style={circle,very thick,draw,inner sep=0.11cm,outer sep=2.5pt},label distance=-1.5mm]]

\node [myn,circle,label=90.000000:$v_{0}$,fill=black!15,minimum size=0.52cm] 
(0) at (90.000000:1.50000) {};

\node (26) at (84.000000:1.70000)  {\scalebox{1}{}}; 

\node [myn,circle,label=130.000000:$v_{1}$,fill=black!8,minimum size=0.52cm] 
(1) at (130.000000:1.50000) {};

\node (2) at (170.000000:1.5500) {\scalebox{2}{$\color{black} \cdot$}};
\node (9) at (160.000000:1.5500) {\scalebox{2}{$\color{black} \cdot$}};
\node (10) at (180.000000:1.5500) {\scalebox{2}{$\color{black} \cdot$}};

\node [myn,circle,label=260.000000:$v_{2^{n}-1}$,minimum size=0.52cm] 
(3) at (210.000000:1.50000) {};

\node [myn,circle,label=260.000000:$v_{2^n}$,fill=black,minimum size=0.52cm] 
(4) at (250.000000:1.50000) {};

\node [myn,circle,label=275.00000:$v_{2^n+1}$,fill=black!70,minimum size=0.52cm] 
(5) at (290.000000:1.50000) {};

\node [myn,circle,label=330.000000:$v_{2^n+2}$,fill=black!50,minimum size=0.52cm] 
(6) at (330.000000:1.50000) {};

\node (7) at (370.000000:1.550000) {\scalebox{2}{$\color{black} \cdot$}};
\node (11) at (360.000000:1.550000) {\scalebox{2}{$\color{black} \cdot$}};
\node (12) at (380.000000:1.550000) {\scalebox{2}{$\color{black} \cdot$}};

\node [myn,circle,label=410.000000:$v_{2^{n+1}}$,fill=black!25,minimum size=0.52cm] 
(8) at (410.000000:1.50000) {};

\draw[very thick,black][dashed,<-] (4) -- (0);
\draw[very thick,black][<-] (0) -- (5);
\DoubleLine{0}{1}{<-,very thick}{}{->,very thick}{}
\DoubleLine{0}{8}{<-,very thick}{}{->,very thick}{}
\DoubleLine{3}{4}{<-,very thick}{}{->,very thick}{}
\draw[very thick,black][->] (4) -- (5);
\DoubleLine{5}{6}{<-,very thick}{}{->,very thick}{}

\pgftransformshift{\pgfpoint{4.200cm}0cm}

\node [myn,circle,label=90.000000:$v'_0$,fill=black!25,minimum size=0.52cm] 
(13) at (90.000000:1.50000) {};

\node [myn,circle,label=130.000000:$v'_1$,fill=black!15,minimum size=0.52cm] 
(14) at (130.000000:1.50000) {};

\node [myn,circle,label=180.000000:$v'_2$,fill=black!8,minimum size=0.52cm] 
(15) at (170.000000:1.50000) {};

\node (16) at (210.000000:1.55000) {\scalebox{2}{$\color{black} \cdot$}};
\node (22) at (200.000000:1.55000) {\scalebox{2}{$\color{black} \cdot$}};
\node (23) at (220.000000:1.55000) {\scalebox{2}{$\color{black} \cdot$}};

\node [myn,circle,label=260.000000:$v'_{2^n}$,minimum size=0.52cm] 
(17) at (250.000000:1.50000) {};

\node [myn,circle,label=275.000000:$v'_{2^n+1}$,fill=black,minimum size=0.52cm] 
(18) at (290.000000:1.50000) {};

\node [myn,circle,label=330.000000:$v'_{2^n+2}$,fill=black!70,minimum size=0.52cm] 
(19) at (330.000000:1.50000) {};

\node [myn,circle,label=365.000000:$v'_{{2^n}+3}$,fill=black!50,minimum size=0.52cm] 
(20) at (370.000000:1.50000) {};

\node (21) at (410.000000:1.550000) {\scalebox{2}{$\color{black} \cdot$}};
\node (24) at (400.000000:1.550000) {\scalebox{2}{$\color{black} \cdot$}};
\node (25) at (420.000000:1.550000) {\scalebox{2}{$\color{black} \cdot$}};

\node (28) at (96.000000:1.70000)  {\scalebox{1}{}}; 
\node (27) at (26) {\scalebox{1}{}} edge [->, line width = 2 pt,bend left=15] (28);

\draw[very thick,black][<-] (17) -- (13);
\draw[very thick,black][,dashed,<-] (13) -- (18);
\DoubleLine{13}{14}{<-,very thick}{}{->,very thick}{}
\DoubleLine{19}{20}{<-,very thick}{}{->,very thick}{}
\DoubleLine{14}{15}{<-,very thick}{}{->,very thick}{}
\draw[very thick,black][->] (17) -- (18);
\DoubleLine{18}{19}{<-,very thick}{}{->,very thick}{}

\pgftransformshift{\pgfpoint{5.60cm}0cm}

\node [myn,circle,label=90.000000:$v_0$,fill=black!15,minimum size=0.52cm] 
(0) at (90.000000:1.50000) {};

\node [myn,circle,label=130.000000:$v_1$,fill=black!10,minimum size=0.52cm] 
(1) at (130.000000:1.50000) {};

\node (2) at (170.000000:1.5500) {\scalebox{2}{$\color{black} \cdot$}};
\node (9) at (160.000000:1.5500) {\scalebox{2}{$\color{black} \cdot$}};
\node (10) at (180.000000:1.5500) {\scalebox{2}{$\color{black} \cdot$}};

\node [myn,circle,label=210.000000:$v_{2^{n}-1}$,minimum size=0.52cm] 
(3) at (210.000000:1.50000) {};

\node [myn,circle,label=270.000000:$v_{2^n}$,fill=black,minimum size=0.52cm] 
(4) at (250.000000:1.50000) {};

\node [myn,circle,label=275.000000:$v_{2^n+1}$,fill=black!70,minimum size=0.52cm] 
(5) at (290.000000:1.50000) {};

\node [myn,circle,label=330.000000:$v_{2^n+2}$,fill=black!50,minimum size=0.52cm] 
(6) at (330.000000:1.50000) {};

\node (7) at (370.000000:1.550000) {\scalebox{2}{$\color{black} \cdot$}};
\node (11) at (360.000000:1.550000) {\scalebox{2}{$\color{black} \cdot$}};
\node (12) at (380.000000:1.550000) {\scalebox{2}{$\color{black} \cdot$}};

\node [myn,circle,label=410.000000:$v_{2^{n+1}}$,fill=black!30,minimum size=0.52cm] 
(8) at (410.000000:1.50000) {};

\draw[very thick,black][<-] (0) -- (5);
\draw[very thick,black][->] (8) -- (3);
\DoubleLine{0}{1}{<-,very thick}{}{->,very thick}{}
\DoubleLine{0}{8}{<-,very thick}{}{->,line width = 2.0pt}{}
\DoubleLine{3}{4}{<-,very thick}{}{->,very thick}{}
\DoubleLine{4}{5}{<-,very thick}{}{->,very thick}{}
\DoubleLine{5}{6}{<-,very thick}{}{->,very thick}{}

\end{tikzpicture}
\end{center}
\endpgfgraphicnamed
\caption{The directed Haj\'os join of two copies of the digraph $H_0$ and the resulting digraph $H_1 = (H_0,v_0,v_{2^n})  \otimes (H'_0,v'_{2^n+1},v'_0)$.}
\label{Fig_H1}
\end{figure}
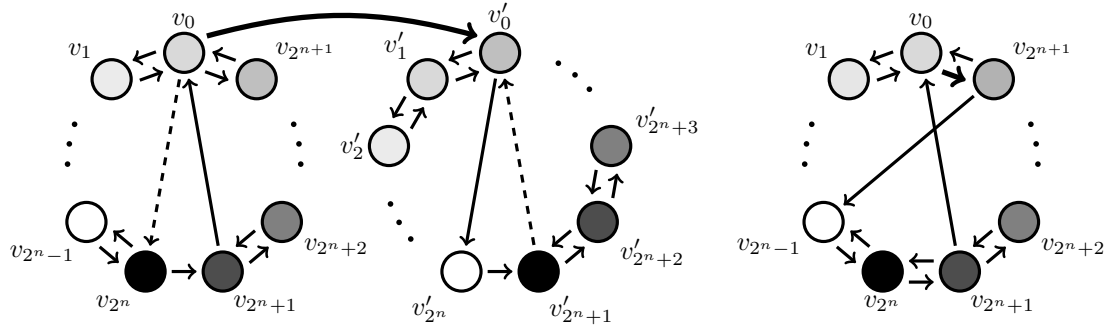

In order to simplify the writing we relabel (in cyclic order) the vertices of $H_1$, for $i = 0,1,...,2^{n+1}$, $v_i: = v_{i+1}$ the index are taken {modulo} $2^{n+1}+1$. 

For $k=1,\dots,n-1$, let $H_{k+1}=H_k\otimes H'_k$.
By Lemma \ref{Lem}, $H_{k+1}$ is a symmetric cycle of order $2^{n+1}+1$ with asymmetric arcs $v_0v_{2^n}$ and $v_{2^n+2^k+1}v_{2^k}$. 
Applying Lemma \ref{Lem}, the digraph $H_{n+1}$ is a symmetric cycle of order $2^{n+1}+1$ with a unique asymmetric arc $v_0v_{2^n}$ because  ${2^n+2^n+1}\equiv 0\mod 2^{n+1}+1$. 

Finally, let $H_{n+2} = (H_{n+1},v_0,v_{2^{n}})  \otimes (H'_{n+1},v'_0,v'_{2^n})$. 
By Remark 1,
 the vertex $v'_{2^n}\in V(\hat{H}_{n+2})$ is identified with the vertex $v_{2^n-(0)+2^n} = v_{2^{n+1}}$, thus the arc $v_{0}v'_{2^{n}}\in A(\hat{H}_{n+2})$ is transformed into the arc $v_{0}v_{2^{n+1}}\in A(H_{n+2})$. Therefore, $H_{n+2} = D(C_{2^{n+1}+1})$.

Observe that we use only one directed Haj\'os  operation (a directed Haj\'os  join) to obtain the digraph $H_0$.
For each of the digraphs $H_1,H_2,\dots, H_{n+2}$ we use $2^{n+1}+1$ directed Haj\'os  operations (a directed Haj\'os  join and $2^{n+1}$ identifications), thus we use $(n+2)(2^{n+1}+1)+1$ directed Haj\'os  operations. 
\end{proof}

The Haj\'os number of an $r$-dichromatic digraph $H$ was defined in \cite{MR0671913} as the minimum number of Haj\'os operations needed to obtain $H$ from $D(K_r)$. 

\begin{teo}\label{TeoNumHaj}
Let $n\ge2$. The Haj\'os  number of a symmetric cycle of order $2^{n}+1$ is at most  $$\dfrac{n\left(2^{n+2}+n+5\right)}{2}-7.$$
Moreover, the Haj\'os  number of a symmetric cycle of order $2m+1$, where $2^{n-1}+1<2m+1<2^n+1$, is at most 
$$\dfrac{n\left(2^{n+2}+n+5\right)}{2}-5.$$
\end{teo}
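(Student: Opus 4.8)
The plan is to treat the two parts separately: the bound for the ``tower'' cycles $D(C_{2^n+1})$ follows by turning Theorem \ref{TheoNextCycle} into a recurrence and solving it in closed form, while the bound for an arbitrary odd cycle whose length lies strictly between two consecutive tower values follows from a direct construction that reuses the same machinery with only a bounded overhead.

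First I would handle the tower cycles. Let $a_k$ be the Haj\'os number of $D(C_{2^k+1})$. Since $D(C_3)=D(K_3)$, the base case is $a_1=0$. The construction at the end of the proof of Theorem \ref{TheoNextCycle} passes from $D(C_{2^k+1})$ to $D(C_{2^{k+1}+1})$ using one Haj\'os join to form $H_0$ together with $k+2$ blocks of $2^{k+1}+1$ operations each, that is, $(k+2)(2^{k+1}+1)+1$ operations; I would use this count rather than the slightly smaller one appearing in the statement of Theorem \ref{TheoNextCycle}, since it is the former that yields the advertised closed form. This gives the recurrence $a_{k+1}=a_k+(k+2)(2^{k+1}+1)+1$ for $k\ge1$, hence $a_n=\sum_{k=1}^{n-1}\bigl[(k+2)(2^{k+1}+1)+1\bigr]$. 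I would then split this into $\sum_{k=1}^{n-1}(k+2)2^{k+1}$, $\sum_{k=1}^{n-1}(k+2)$, and $\sum_{k=1}^{n-1}1$. The only nonroutine piece is the arithmetico-geometric sum, which I would evaluate with the identity $\sum_{k=0}^{m}k\,2^{k}=(m-1)2^{m+1}+2$ (or by the usual ``multiply by $2$ and subtract'' trick); after re-indexing one gets $\sum_{k=1}^{n-1}(k+2)2^{k+1}=n\,2^{n+1}-4$. Combining this with $\sum_{k=1}^{n-1}(k+2)+\sum_{k=1}^{n-1}1=\tfrac{(n-1)(n+6)}{2}$ produces $a_n=\tfrac{n\,2^{n+2}+n^{2}+5n-14}{2}=\tfrac{n(2^{n+2}+n+5)}{2}-7$, which is exactly the claimed bound. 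I would sanity-check the formula against $a_2=16$ and $a_3=53$.

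For the second part, let $2m+1$ satisfy $2^{n-1}+1<2m+1<2^n+1$ (the range is empty for $n=2$ and first becomes nontrivial at $n=3$, where the only target is $C_7$). The key point I would stress is that one may \emph{not} simply build $D(C_{2^n+1})$ and then shrink it: a Haj\'os join raises the order and each identification lowers it by exactly one, so reducing the order from $2^n+1$ down to $2m+1$ would cost on the order of $2^n-2m$ operations, not a constant. Instead I would give a direct construction of $D(C_{2m+1})$. Writing $b=2m+1-2^{n-1}$, which is odd and satisfies $3\le b\le 2^{n-1}-1$, one has $2m+1=(2^{n-1}+1)+b-1$, so $D(C_{2m+1})$ can be produced as a directed Haj\'os join $D(C_{2^{n-1}+1})\triangledown D(C_b)$ followed by cyclic Haj\'os identifications exactly of the type analysed in Remark \ref{Obs} and Lemma \ref{Lem}, now used to erase the two asymmetric arcs introduced by the join. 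The target bound $a_n+2$ is then obtained by checking that, since the fix-up phase acts on a cycle of order $2m+1<2^n+1$, it never costs more than the full doubling phase that produces $C_{2^n+1}$, plus a constant overhead of two operations.

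The hard part will be the cost bookkeeping in this second step. I must show that clearing all asymmetric arcs for an arbitrary odd target is uniformly at most two operations more expensive than for the next tower length $2^n+1$, and in particular that the auxiliary cycle $C_b$ is supplied \emph{without} re-paying its full Haj\'os number, since a naive recursion on $b$ would accumulate the costs of several tower cycles and need not fit inside the budget $a_n+2=a_{n-1}+(n+1)(2^n+1)+3$. I expect the resolution to come from mirroring the stage-by-stage migration of the two asymmetric arcs under $\otimes$ in Lemma \ref{Lem}, tracking them until one coincides with a symmetric arc or wraps around modulo $2m+1$, and arranging the decomposition so that the extra work beyond the doubling of $C_{2^{n-1}+1}$ is a single join together with one final identification. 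Making this constant come out to exactly $2$, independently of $m$, is the delicate point and is where I would concentrate the effort.
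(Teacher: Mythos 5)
Your first part is correct and is essentially the paper's own argument: you iterate Theorem \ref{TheoNextCycle} using the per-step count $(k+2)(2^{k+1}+1)+1$ from its \emph{proof} (as the paper itself does), and your closed form $\sum_{k=1}^{n-1}\bigl[(k+2)(2^{k+1}+1)+1\bigr]=\frac{n\left(2^{n+2}+n+5\right)}{2}-7$ agrees with the paper's computation, in which the subtracted $7$ is precisely the $i=1$ term of $\sum_{i=1}^{n}\bigl[(i+1)(2^{i}+1)+1\bigr]=\frac{n}{2}\left(2^{n+2}+n+5\right)$.

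The second part, however, has a genuine gap, and it originates in a misreading of the definition of identification. In this paper, ``identifying'' is defined for a whole non-empty independent \emph{set} $I$: one forms $D-I$, adds a single new vertex $v$, and gives $v$ all in- and out-neighbours of $I$. This is one Haj\'os operation regardless of $|I|$. Consequently your objection that shrinking $D(C_{2^n+1})$ to $D(C_{2m+1})$ ``would cost on the order of $2^n-2m$ operations'' is false, and the route you rejected is exactly the paper's proof: build $D(C_{2^n+1})$ with at most $\frac{n\left(2^{n+2}+n+5\right)}{2}-7$ operations, then perform just two identifications, collapsing the independent set $V_{2m}=\{v_{2m},v_{2m+2},\dots,v_{2^n}\}$ into $v_{2m}$ and the independent set $V_{2m+1}=\{v_{2m+1},v_{2m+3},\dots,v_{2^n+1}\}$ into $v_{2m+1}$; this yields $D(C_{2m+1})$ and the bound $\frac{n\left(2^{n+2}+n+5\right)}{2}-5$. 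Your replacement construction does not repair the damage: the cost bookkeeping is explicitly deferred (``the hard part''), you never explain how $D(C_b)$ is supplied without paying its own Haj\'os number, and the repair machinery of Lemma \ref{Lem} is defined only for two digraphs of the \emph{same} order, so each application of $\otimes$ to a digraph of order $2m+1$ costs $2m+1$ operations (one join plus $2m$ pairwise identifications), not a constant. Nothing in your sketch shows the total stays within the stated budget, so as written the second bound remains unproven in your proposal.
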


\begin{proof}

By recursively applying Theorem \ref{TheoNextCycle}, the number of steps to build the symmetric cycle of order $2^n+1$ from $D(K_3)$ is at most:
$$\sum\limits_{i=2}^n(i+1)\left(2^{i}+1\right)+1. $$

Note that 
$$\begin{array}{lcl}
\sum\limits_{i=1}^ni2^{i}&=&n(2^{n+1}-2)- \sum\limits_{j=1}^{n-1}\left(2^{j+1}-2\right)\\
& = & (n-1)\left(2^{n+1}\right)+2.\\
\end{array}
$$
Thus, 
$$\begin{array}{lcl}
\sum\limits_{i=1}^n(i+1)\left(2^{i}+1\right)+1  
& = & \sum\limits_{i=1}^ni2^{i}+ \sum\limits_{i=1}^ni + \sum\limits_{i=1}^n2^{i} + \sum\limits_{i=1}^n1 + \sum\limits_{i=1}^n1\\\\
& = & (n-1)2^{n+1}+2 + \frac{n(n+1)}{2}+(2^{n+1}-2)+2n\\\\
& = & \frac{n}{2}\left(2^{n+2}+n+5\right).\end{array}
$$
Since 
$$\sum\limits_{i=2}^n(i+1)\left(2^{i}+1\right)+1  =  \sum\limits_{i=1}^n(i+1)\left(2^{i}+1\right)+1 -7, $$
the first result follows.

Consider a symmetric cycle of order $2^n+1$. Let $1\le m< 2^{n-1}$, let $V_{2m}=\{v_{2m}, v_{2m+2},\dots,v_{2^n}\}$ and let $V_{2m+1}=\{v_{2m+1}, v_{2m+3},\dots,v_{2^n+1}\}$, note that $V_{2m}$ and $V_{2m+1}$ are both independent sets of vertices.
Identifying the independent set of vertices $V_{2m}$ into the vertex $v_{2m}$ and identifying the independent set of vertices $V_{2m+1}$  into the vertex $v_{2m+1}$, we obtain the symmetric cycle of order $2m+1$. Thus, we can construct the symmetric cycle of order $2m+1$ using $\dfrac{n\left(2^{n+2}+n+5\right)}{2}-5$ directed Haj\'os  operations and the result follows.
\end{proof}

\section{Computational Complexity}
In this section we determine the complexity of our procedure and which is an upper bound for the complexity of constructing a symmetric odd cycle using directed Haj\'os operations. 
\begin{teo}
Let $n\ge5$ be an odd integer. 
The complexity of the procedure to obtain  $D(C_{n})$ from $D(K_3)$ is $\Theta(n\ln(n))$.
\end{teo}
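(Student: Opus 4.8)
The plan is to measure the complexity of the procedure by the number of directed Haj\'os operations it performs and to show that, as a function of the cycle order, this number is simultaneously $O(n\ln n)$ and $\Omega(n\ln n)$. Throughout I reserve $k$ for the exponent that appears in Theorems \ref{TheoNextCycle} and \ref{TeoNumHaj} (where the cycle order is written $2^k+1$), to avoid clashing with the order $n$ of the present statement.

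For the upper bound I would fix, for a given odd $n\ge 5$, the unique integer $k$ with $2^{k-1}+1<n\le 2^k+1$, that is $k=\lceil\log_2(n-1)\rceil$; since $n\ge5$ we have $k\ge2$, so Theorem \ref{TeoNumHaj} applies and bounds the number of operations by $\frac{k(2^{k+2}+k+5)}{2}-c$ with $c\in\{5,7\}$. Two elementary estimates tie $k$ and $2^k$ back to $n$: from $2^{k-1}+1<n\le 2^k+1$ one reads off $n-1\le 2^k<2(n-1)$, so $2^k=\Theta(n)$, and taking logarithms, $\log_2(n-1)\le k<\log_2(n-1)+1$, so $k=\Theta(\ln n)$. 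Expanding the bound as $k\cdot 2^{k+1}+\frac{k^2+5k}{2}-c$, the leading term is $k\cdot2^{k+1}=\Theta(k\cdot 2^k)=\Theta(n\ln n)$, while the remaining terms are $O(\ln^2 n)=o(n\ln n)$; hence the operation count is $O(n\ln n)$.

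For the matching lower bound I would not need the full sum: the very last doubling stage, which builds $D(C_{2^k+1})$ from $D(C_{2^{k-1}+1})$ by Theorem \ref{TheoNextCycle}, already costs $(k+1)(2^k+1)+1=\Theta(k\cdot 2^k)=\Theta(n\ln n)$ operations, and this is only a part of the total. Therefore the procedure uses $\Omega(n\ln n)$ operations as well, and combining the two directions gives $\Theta(n\ln n)$.

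The only step demanding care — and the one I expect to be the main (if minor) obstacle — is controlling the effect of rounding $n$ up to the next value $2^k+1$: one must check that the larger cycle actually built has order within a constant factor of $n$, which is exactly the inequality $2^k<2(n-1)$, so that no spurious factor inflates the estimate and the upper bound remains $\Theta(n\ln n)$ rather than degrading. Once this is in place, verifying that the sub-dominant $\Theta(\ln^2 n)$ terms and the additive constant are asymptotically negligible, and noting that the change of base between $\log_2$ and $\ln$ only alters constant factors absorbed by $\Theta$, completes the argument.
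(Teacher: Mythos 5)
Your proposal is correct, and your upper bound follows the paper's own route: both arguments invoke Theorem \ref{TeoNumHaj} with the exponent fixed by $2^{k-1}+1<n\le 2^k+1$, translate $2^k=\Theta(n)$ and $k=\Theta(\ln n)$, and observe that the leading term $k\cdot 2^{k+1}$ dominates the count. Where you genuinely diverge is the lower bound. The paper treats $X=\frac{m}{2}\left(2^{m+2}+m+5\right)-\alpha$ as the \emph{exact} number of operations performed by the procedure and bounds it below by its leading term $4n_1\log_2(n_1-1)$, pushing an explicit chain of inequalities to the two-sided estimate $n\ln n< X<13\,n\ln n$. You instead lower-bound the total by the cost of a single stage: the final doubling from $D(C_{2^{k-1}+1})$ to $D(C_{2^k+1})$, which by Theorem \ref{TheoNextCycle} already requires $(k+1)(2^k+1)+1=\Theta(k\cdot 2^k)=\Theta(n\ln n)$ operations. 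Your route is arguably the more careful one: Theorem \ref{TeoNumHaj} is phrased only as an ``at most'' statement (it bounds the Haj\'os number, a minimum over all constructions), so by itself it cannot yield a lower bound; the paper's proof silently reuses the formula as the exact operation count of the procedure, whereas your argument needs only the fact that the procedure executes that last doubling stage in full, which is immediate from how the construction in Theorem \ref{TeoNumHaj} is assembled. The price is that your conclusion comes with unspecified constants hidden in the $\Theta$, while the paper's explicit arithmetic produces the concrete constants $1$ and $13$; both versions establish the theorem as stated.
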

\begin{proof}
Let $m\geq2$ and let $n$ be an odd integer such that $n_1< n\leq n_2$ with
$$
n_1 = 2^{m-1}+1~ and~ n_2 = 2^m+1.
$$

With our procedure, one needs
\begin{equation}\label{steps}
X = \displaystyle\frac{m}{2}(2^{m+2}+m+5)- \alpha
\end{equation} 
directed Haj\'os  operations to obtain a symmetric cycle of order $n$ from $D(K_3)$, where

$$\alpha = \left \{\begin{array}{ccc}
5 & \text{ if } & n < n_2, \\
7 & \text{ if } & n = n_2. \\
\end{array}\right.$$ 
Since $n_1 = 2^{m-1} + 1$, then
$$\begin{array}{rcl}
n_1 -1 & = & 2^{m-1} \\
\log_{2}(n_1 - 1) & = &  m-1, \\
\end{array}$$ 
therefore
\begin{equation}\label{m}
m = \log_{2}(n_1 - 1)+1.
\end{equation}
In equation \ref{steps} we have that
$$\begin{array}{rcl}
X  & = & \displaystyle\frac{\log_{2}(n_1 - 1)+1}{2}(2^{(\log_{2}(n_1 - 1)+1)+2}+(\log_{2}(n_1 - 1)+1)+5)- \alpha\\\\ 
  & = & \displaystyle\frac{\log_{2}(n_1 - 1)+1}{2}(8n_1 +\log_{2}(n_1 - 1) -2)- \alpha \\\\
 & = &  4n_1\log_{2}(n_1 - 1)
+ \displaystyle\frac{(\log_{2}(n_1 - 1))^2}{2} 
- \log_{2}(n_1 - 1)\\\\
 &  & + 4n_1 + \displaystyle\frac{\log_{2}(n_1 - 1)}{2} - 1 - \alpha \\\\
 & < & 9n_1\log_{2}(n_1 - 1)~ <~ 9n\log_{2}(n) ~ = ~ \displaystyle\left(\frac{9}{\ln(2)}\right)n\ln(n)~ <~ 13n\ln(n).   \\\\ 
\end{array}$$ 
On the other hand, we have
$$\begin{array}{rcl}
 X & = &  4n_1\log_{2}(n_1 - 1)
+ \displaystyle\frac{(\log_{2}(n_1 - 1))^2}{2} 
- \log_{2}(n_1 - 1)\\\\
 &  & + 4n_1 + \displaystyle\frac{\log_{2}(n_1 - 1)}{2} - 1 - \alpha \\\\
 & > & 4n_1\log_{2}(n_1 - 1) \\\\ 
  & > & 2n_1\log_{2}((n_1-1)^2)~ =~ (2^m+2)\log_{2}(2^{m+1}) \\\\ 
  & > & (2^m+1)\log_{2}(2^{m} + 1)~ =~ n_2\log_{2}(n_2)\\\\  
  & \geq & n\log_{2}(n)~ >~  n\ln(n).\\\\  
\end{array}$$ 

Therefore $n\ln(n) < X < 13n\ln(n)$ and the complexity of our procedure is $\Theta(n\ln(n))$.
\end{proof}


\section{Conclusions}

Although it was proved in \cite{BangJensen2020} that any 3-critical digraph can be constructed by a sequence of Haj\'os operations, it is not a trivial task to obtain such a sequence even for simple digraphs such as symmetric cycles of odd length and in particular, the symmetric cycle of length 5. 
Using genetic algorithm, we obtained in  \cite{GarciaAlta},  a sequence of Haj\'os operations for the symmetric cycle of length 5, and generalizing this particular result permitted  us to construct any symmetric odd cycle using Haj\'os operations.  

We believe that these ideas can be used in order to obtain the 3- and 4-critical tournaments characterized in \cite{Neumann1994} and hopefully a improvement of the upper bound for the minimum order of a 5-critical tournament, which is known to be 19 (see 4.6. An application \cite{NeumannLaraZykov}).


\begin{thebibliography}{39}

\bibitem{Andres} D. Andres, W. Hochstattler. Perfect digraphs J. Graph Theory {\bf 79}, 21--29 (2015).

\bibitem{diachromatico} G. Araujo-Pardo, J. J. Montellano-Ballesteros, M. Olsen, C. Rubio-Montiel. The diachromatic number of digraphs. Electr. J. Comb. {\bf 25}(3), \#P3.51 (2018).

\bibitem{BangJensen2020} J. Bang-Jensen, T. Bellitto, T. Schweser, M. Stiebitz. Haj\'os  and Ore constructions for digraphs. Electr. J. Comb. {\bf 27}(1), \#P1.63 (2020).

\bibitem{Conceptos}
J. Bang-Jensen and G. Gutin.
{Digraphs: Theory, Algorithms, and Applications}, 2nd edn, Springer-Verlag, London, 2009.

\bibitem{Bokal2004} D. Bokal, G. Fijavz, M. Juvan, P.M. Kayll, B. Mohar. The circular chromatic number of a digraph. J. Graph Theory {\bf 46}, 227--240 (2004).

\bibitem{Hamilton} K. Bouazzi, M. Hammami, S. Bouamama. Application of an improved genetic algorithm to Hamiltonian circuit problem, Procedia 
 Comput. Sci. {\bf 192}, 4337--4347 (2021).

\bibitem{Cervantes2019} J. Cervantes-Ojeda, M. G\'omez-Fuentes, D. Gonz\'alez-Moreno, and M. Olsen. Rainbow Connectivity Using a Rank Genetic Algorithm: Moore Cages with Girth Six. Hindawi, J. Appl. Math. {\bf 2019}, Article ID 4073905, 7 pages (2019).

\bibitem{Cervantes2009} J. Cervantes, C. R. Stephens, Limitations of existing mutation rate heuristics and how a rank GA overcomes them. IEEE Transactions on Evolutionary Computation {\bf 13}(2), 369--397 (2009).


\bibitem{Narda} N. Cordero-Michel, H. Galeana-S\'anchez. New Bounds for the Dichromatic Number of a Digraph. Discrete Math. Theor. Comput. Sci. {\bf 21}(1), Paper No. 7, 14 pp (2019). 

\bibitem{DASGUPTA13} D. Dasgupta, Z. Michalewicz. Evolutionary algorithms in engineering applications. Springer Science and Business Media (2013).

\bibitem{GarciaAlta} J. C. Garc\'ia-Altamirano, M. Olsen, J. Cervantes-Ojeda. 
{How to construct the symmetric cycle of length 5 using Haj\'os construction with an adapted Rank Genetic Algorithm} (submitted) {https://doi.org/10.48550/arXiv.2210.05080}.

\bibitem{GLOR} D. Gonz\'alez-Moreno, R. Hern\'andez-Ortiz, B. Llano, M. Olsen. The dichromatic polynomial of a digraph, Graphs and Comb.  38:85 (2022).  

\bibitem{Hajos} G. Haj\'os. \"Uber eine Konstruktion nicht n-f\"arbbarer Graphen. Wiss. Z. Martin Luther Univ. Halle-Wittenberg, Math. Natur. Reihe {\bf 10}, 116--117 (1961).

\bibitem{Harut} A. Harutyunyan, B. Mohar. Strengthened brooks theorem for digraphs of girth at least three. Electronic J. Comb. {\bf 18}(1), \#P195  (2011).  

\bibitem{Hoch} W. Hochst\"{a}ttler. A flow theory for the dichromatic number. European J. of Comb. {\bf 66},  160--167 (2017). 

\bibitem{JAKOBS96} S. Jakobs. On genetic algorithms for the packing of polygons. Eur. J. Oper. Res. {\bf 88}(1), 165--181 (1996).

\bibitem{JR} T. R. Jensen, G. F. Royle. Haj\'os  constructions of critical graphs, J. Graph Theory {\bf 30}(1), 37--50 (1999). 

\bibitem{JONG89} K. A. De Jong, W. M. Spears. Using genetic algorithms to solve NP-complete problems. In ICGA pp. 124--132 (1989, June).

\bibitem{MR0671913}A. J. Mansfield,  D. J. A.  Welsh. Some colouring problems and their complexity, Graph theory (Cambridge, 1981), North-Holland Math. Stud. {\bf  62}, Amsterdam: North-Holland, pp. 159--170 (1982). 

\bibitem{Neumann1982} V. Neumann-Lara. The dichromatic number of a digraph. J. Combin. Theory Ser. B {\bf 33},  265--270 (1982).

\bibitem{Neumann1994} V. Neumann-Lara. The 3- and 4-chromatic tournaments of minimum order. Discrete Math. {\bf 135}, 233--243 (1994).

\bibitem{NeumannLaraZykov} V. Neumann-Lara. Dichromatic number, circulant tournaments an Zykov sums of digraphs. Discuss. Math. Graph Theory {\bf 20}(2)  197--207 (2000).

\bibitem{POURRAJABIAN13} A. Pourrajabian, R. Ebrahimi, M. Mirzaei, M. Shams. Applying genetic algorithms for solving nonlinear algebraic equations. Appl. Math. Comput. {\bf  219}(24), 11483--11494 (2013).

\bibitem{Urquhart} A. Urquhart. The graph constructions of Haj\'os  and Ore. J. Graph Theory {\bf 26} 211--215 (1997).

\end{thebibliography}
\end{document}